\newcommand{\BlackBox}{\rule{1.5ex}{1.5ex}}  
\newenvironment{proof}{\par\noindent{\bf Proof\ }}{\hfill\BlackBox\\[2mm]}
\newtheorem{theorem}{Theorem}[section]
\newtheorem{remark}{Remark}[section]
\newtheorem{definition}{Definition}[section]
\DeclareMathOperator*{\argmax}{arg\,max}
\title{Competitive Algorithms for Online Budget-Constrained Continuous DR-Submodular Problems}
\author{%
  Omid Sadeghi\\
  University of Washington\\
  Seattle, WA 98195\\
  \texttt{omids@uw.edu} \\
  \And
  Reza Eghbali \\
  Tetration Analytics, Cisco Systems, Inc.\\
  Palo Alto, CA 94301\\
  \texttt{reghbali@tetrationanalytics.com} \\
  \AND
  Maryam Fazel \\
  University of Washington\\
  Seattle, WA 98195 \\
  \texttt{mfazel@uw.edu}\\
}
\begin{document}

\maketitle

\begin{abstract}
	In this paper, we study a certain class of online optimization problems, where the goal is to maximize a function that is not necessarily concave and satisfies the Diminishing Returns (DR) property under budget constraints. We analyze a primal-dual algorithm, called the Generalized Sequential algorithm, and we obtain the first bound on the competitive ratio of online monotone DR-submodular function maximization subject to linear packing constraints which matches the known tight bound in the special case of linear objective function. 
\end{abstract}

\section{Introduction}
Online optimization covers a large number of problems including online resource allocation, online bipartite matching \cite{obm-Karp}, the ``Adwords'' problem \cite{adwords-Mehta,adwords-Buchbinder}, online submodular welfare maximization \cite{swm-Lehmann}, online linear programming \cite{onlinecp-Buch} and online concave packing problem \cite{online-Azaretal,online-Eghbali}. One type of algorithms proposed for solving such problems are primal-dual algorithms where the dual variable is updated at each step and is used to get the update rule for the primal variable \cite{primaldual-Buchbinder}.\\
Depending on how much information about the online input is available in advance to the algorithm, online problems have been categorized into adversarial (worst-case) (e.g., in \cite{adwords-Mehta}) and stochastic input models (e.g., in \cite{auction-Kleinberg}) and we consider the former in this paper. In the adversarial model, it is assumed that the algorithm has no knowledge of the online input.
Performance of online algorithms is measured by their competitive ratio defined as the ratio of the value of the objective function at the output of the algorithm to the maximum objective value attained offline. In the worst-case model, one is interested in deriving lower bounds on the competitive ratio of the algorithm that holds for all arbitrary online inputs.\\
In this paper, we discuss a certain class of online optimization problems where the objective function is assumed to satisfy the Diminishing Returns (DR) property under linear packing constraints. We introduce a greedy primal-dual algorithm, called the Generalized Sequential algorithm and we analyze its performance theoretically and numerically under the worst-case input model. Specifically, we make the following contributions:
\begin{itemize}
	\item We introduce the online monotone continuous DR-submodular function maximization subject to linear packing constraints, and specify various online discrete submodular problems whose continuous generalization could be cast in this framework, for example the generalized continuous version of online submodular welfare maximization \cite{swm-Lehmann} and online knapsack constrained monotone submodular function maximization \cite{knapsack-Maehara} are well-known cases.
	\item We introduce the Generalized Sequential algorithm for this class of problems. Denoting the number of linear packing constraints by $n$, we consider the following two cases and we derive competitive ratio bounds for each case:
	\begin{itemize}
		\item $n=1$: In this case, our problem is the generalization of online knapsack constrained monotone submodular function maximization \cite{knapsack-Maehara} to the continous setting. For this online problem, we obtain a competitive ratio of $\frac{1}{1-\alpha+\ln(\frac{U}{L})}$ where $L$ and $U$ are lower and upper bounds on the value-to-weight ratio of the items respectively and $\alpha$ captures the curvature of the DR-submodular utility function.
		\item $n>1$: In this case, our problem generalizes the Adwords problem \cite{adwords-Mehta} and the online linear programming problem \cite{onlinecp-Buch} by allowing the utility function to be DR-submodular rather than linear. For this setting, we obtain the first competitive ratio bound which is optimal in the special cases. Specifically, if the objective function is linear, our problem reduces to the online linear programming and the algorithm achieves the optimal competitive ratio \cite{online-Eghbali,onlinecp-Buch}. If in addition to the linearity of the objective function, the linear packing constraint and the objective function are equal, the problem simplifies to the Adwords problem and we obtain the optimal $1-\frac{1}{e}$ competitive ratio \cite{adwords-Mehta,adwords-Buchbinder} (note that since we allow fractional solutions for the Adwords problem, we do not need the small bids assumption to obtain the optimal competitive ratio).
	\end{itemize}
\end{itemize}
Finally, we present numerical experiments on a class of non-concave DR-submodular utility functions to demonstrate the performance of the Generalized Sequential algorithm.\\
It is noteworthy that although our framework could be interpreted as the generalization of online budgeted discrete submodular problems to the continuous setting, we do not aim to solve the discrete problem itself. In other words, our goal is to solve a class of online budgeted problems where the objective function is originally continuous and DR-submodular. Therefore, we do not round the fractional output of our proposed algorithm. 

\subsection{Notation}
We will use $[m]$ to denote the set $\{1,2,\dots,m\}$. For a matrix $A\in \mathbb{R}^{n\times m}$, we will denote its $i$-th row by $\hat{a}_i^T$ for all $i\in[n]$ and its $t$-th column by $a_t$ for all $t\in[m]$. Also, $a_{i,t}$ corresponds to the $(i,t)$-th entry of the matrix A. We denote the transpose of a matrix $A$ by $A^T$. The inner product of two vectors $x,y\in\mathbb{R}^m$ is denoted by either $\langle x, y \rangle$ or $x^T y$. Also, for two vectors $x,y\in \mathbb{R}^m$, $x\preceq y$ implies that $x_i \leq y_i~\forall i\in[m]$. We use $F^*$ to denote the concave conjugate of a function $F:\mathbb{R}^m \to \mathbb{R}$ which is defined as follows:
\begin{equation*}
	F^*(y)=\inf_x \big(\langle x,y \rangle -F(x)\big)
\end{equation*} 
For a convex set $\mathcal{P}$, the support function of $\mathcal{P}$ is defined in the following:
\begin{equation*}
	\sigma_{\mathcal{P}}(x)=\sup_{y\in \mathcal{P}}\langle x,y\rangle 
\end{equation*}
\section{Diminishing Returns (DR) property}
\begin{definition}\label{def:dr}
	A differentiable function $F:K \rightarrow \mathbb{R}$, $K\subset \mathbb{R}_+^m$, satisfies the Diminishing Returns (DR) property if:
	\begin{equation}
	x\succeq y \Rightarrow \nabla F(x) \preceq \nabla F(y)\nonumber
	\end{equation}
	In other words, $\nabla F$ is an anti-tone mapping from $\mathbb{R}^m$ to $\mathbb{R}^m$.\\
	If $F$ is twice differentiable, DR property is equivalent to the Hessian matrix being element-wise non-positive. Note that for $m=1$, the DR property is equivalent to concavity. However, for $m>1$, concavity implies negative semi-definiteness of the Hessian matrix which is not equivalent to the Hessian matrix being element-wise non-positive.
\end{definition}
A similar property is introduced in \cite{swm-Vondrak} and \cite{dr-Bian} as well and functions satisfying this property are called ``smooth submodular'' and ``DR-submodular'' there respectively. Additionally, \cite{online-Eghbali} defined the DR property for concave functions with respect to a partial ordering induced by a cone and showed that by taking the cone to be $\mathbb{R}_+^m$, Definition \ref{def:dr} is recovered and if the cone of positive semi-definite matrices is considered, the DR property generalizes to matrix ordering as well \cite{onlinematrix-Eghbali}. \cite{dr-Bian} showed that DR-submodular functions are concave along any non-negative direction, and any non-positive direction. In other words, for a DR-submodular function $F$, if $t\geq 0$ and $v\in \mathbb{R}^m$ satisfies $v\succeq 0$ or $v\preceq 0$, we have:
\begin{equation*}
F(x+tv)\leq F(x)+t\langle \nabla F(x),v\rangle 
\end{equation*}
\subsection{Examples of continuous non-concave DR-submodular functions}\label{examples}
\textbf{Multilinear extension of discrete submodular functions. \cite{multilinearmax-calinescu}} A discrete function $f:\{0,1\}^V\rightarrow \mathbb{R}$ is submodular if for all $j\in V$ and $A\subseteq B\subseteq V\setminus\{j\}$, the following holds:
\begin{equation*}
	f(A\cup\{j\})-f(A)\geq f(B\cup\{j\})-f(B)
\end{equation*} 
The multilinear extension $F:[0,1]^V \rightarrow \mathbb{R}$ of $f$ is defined as:
\begin{equation*}
F(x)=\sum_{S\subset V}f(S)\prod_{i\in S}x_i \prod_{j\notin S}(1-x_j)=\mathbb{E}_{S\sim x}[f(S)]
\end{equation*}
Multilinear extensions are extensively used for maximizing their corresponding submodular set function and are known to be a special case of non-concave DR-submodular functions. The Hessian matrix of this class of functions has non-positive off-diagonal entries with zeros on its diagonal. It has been shown that for a large class of submodular set functions, their multilinear extension could be efficiently computed. Weighted matroid rank function, set cover function, probabilistic coverage function, graph cut function and concave over modular function are all examples of such submodular functions (see \cite{sub-Iyer,drapp-Bian} for more examples and details).\\
\textbf{Non-convex/non-concave quadratic functions.} Consider the quadratic function $F(x)=\frac{1}{2}x^T Hx+h^T x+c$. If the matrix $H$ is element-wise non-positive, $F$ would be a DR-submodular function. We use this class of non-concave DR-submodular functions for the numerical experiments.\\
See \cite{dr-Bian,dralg-Bian} for more examples of continuous DR-submodular objective functions.
\section{Problem Statement}
The offline constrained optimization problem is as follows:
\begin{equation}\label{eq:main-prob-con}
\begin{array}{ll}
\mbox{maximize}& \sum_{i=1}^n H_i(\hat{x}_i)\\
\mbox{subject to}& x_t \in F_t\subseteq \mathbb{R}_+^n~\forall t\in[m]\\
&\hat{c}_i^T \hat{x}_i\leq 1~\forall i\in[n]
\end{array}
\end{equation}
where $\hat{x}_i^T\in \mathbb{R}_+^m$ is the $i$-th row and $x_t \in \mathbb{R}_+^n$ is the $t$-th column of the variable matrix $X\in \mathbb{R}_+^{n\times m}$, $\hat{c}_i^T \in \mathbb{R}_+^{m}$ and $c_t \in \mathbb{R}_+^n$ are the $i$-th row and $t$-th column of the cost matrix $C\in \mathbb{R}_+^{n\times m}$ respectively.
For all $i\in[n]$, $H_i: K \to \mathbb{R}$, $K\subset \mathbb{R}_+^m$, is a differentiable monotone non-decreasing DR-submodular function which is zero at the origin (i.e., $H_i(0)=0$). For all $t\in[m]$, $F_t$ is a compact convex constraint set that contains the origin and $\|x\|_2 \leq \lambda$ for all $x\in F_t$.\\
In the online setting, at step $t\in [m]$, $c_t$ and $F_t$ arrive online and the algorithm should choose $x_t \in F_t$ to maximize the overall objective function. Note that at each step $t\in[m]$, the function $H_i~\forall i\in[n]$ is only known over subsets of variables that have already arrived. Thus, we don't have access to the objective function in advance.\\
The penalized formulation of problem \ref{eq:main-prob-con} is the following: 
\begin{equation}\label{eq:main-prob-mod}
\begin{array}{ll}
\mbox{maximize}& \sum_{i=1}^n \big(H_i(\hat{x}_i)+G_i(\hat{c}_i^T \hat{x}_i)\big)\\
\mbox{subject to}& x_t \in F_t\subseteq \mathbb{R}^n~\forall t\in[m]\\
\end{array}
\end{equation}
As an example, if for all $i\in[n]$,
\[  G_i(u) = 
\begin{cases}
0 &\quad\text{if } 0\leq u\leq 1 \\
-\infty &\quad\text{if } u> 1\\
\end{cases}
\] 
i.e., the concave indicator function of the interval $[0,1]$, the above two optimization problems are equivalent.\\
We aim to design differentiable, concave and monotone non-increasing penalty functions $G_i: \mathbb{R}_+ \rightarrow \mathbb{R}~\forall i\in[n]$ and use them in our online algorithm such that the output doesn't violate any of the linear packing constraints.\\ 
Online linear programming \cite{onlinecp-Buch}, the Adwords problem \cite{adwords-Mehta}, single-unit combinatorial auction problem \cite{auction-Huang} and continuous generalization of online knapsack problem \cite{knapsack-Zhou} are all special cases of this framework for linear objective functions.\\
Multiple applications of this framework are provided in Appendix A. 
\subsection{Dual Problem}
The dual problem of the constrained problem \ref{eq:main-prob-con} is as follows: (See Appendix B for the derivation) 
\begin{equation}\label{eq:dual}
\begin{array}{ll}
\mbox{minimize}& \sum_{t=1}^m \sigma_{F_t} \big(\begin{bmatrix} y_{1,t}-z_1c_{1,t} \\ \vdots \\ y_{n,t}-z_n c_{n,t} \end{bmatrix}\big)-\sum_{i=1}^n H_i^*(\hat{y}_i)+\sum_{i=1}^n z_i\\
\mbox{subject to}& z_i\geq 0 ~\forall i\in[n]\\
\end{array}
\end{equation}
where for all $i\in[n]$, $z_i\in \mathbb{R}_+$, $\hat{y}_i^T \in \mathbb{R}^m$ is the $i$-th row of the dual matrix variable $Y$ and $y_{i,t}$ is the $(i,t)$-th entry of this matrix.\\
Karush–Kuhn–Tucker (KKT) conditions can be written as:
\begin{align*}
x_t^* &\in \argmax_{x \in F_t} \ \langle x,\begin{bmatrix} y_{1,t}^*-z_1^*c_{1,t} \\ \vdots \\ y_{n,t}^*-z_n^*c_{n,t} \end{bmatrix} \rangle \\
\hat{y}_i^* &= \nabla H_i(\hat{x}_i^*)~i=1,\dots,n\\
z_i^* &= -G'_i(\hat{c}_i^T \hat{x}_i^*)~i=1,\dots,n\\
\end{align*}
where $G'_i$ is the derivative of the scalar penalty function $G_i$. We remind the reader that we aim to design differentiable penalty functions $G_i~\forall i\in[n]$ and therefore, we have used $G'_i$ in the KKT conditions.\\
We will use these KKT conditions to design the Generalized Sequential Algorithm.
\section{Generalized Sequential algorithm and Competitive Ratio Analysis}
\subsection{Generalized Sequential Algorithm}
Consider the Generalized Sequential algorithm below which outputs $\tilde{x}_t$ at each online step $t\in[m]$.\\

\begin{algorithm}[h]
	\label{alg:seq-mod}
	\caption{Generalized Sequential Algorithm}
	\begin{algorithmic}
		\STATE \textbf{Input}: Penalty functions $G_i~\forall i\in[n]$, $K$ and $m$
		\STATE Initialize $\tilde{X}=0$
		\FOR{$t=1$ {\bfseries to} $m$}
		\STATE 	$c_t, F_t$ arrive online and gradient of $H_i~\forall i\in[n]$ over the first $t$ variables (i.e., all other $m-t$ variables being zero) is accessible
		\STATE $\tilde{x}_t (0)=0$
		\FOR{$k=1$ {\bfseries to} $K$}
		\STATE $v_t (k)=\argmax_{v\in F_t} \langle v, d_t(k-1)\rangle$~~~~~~~~~~~~~\COMMENT{$d_t(k-1)$ defined in equation \ref{d}}
		\STATE $\tilde{x}_t (k)=\tilde{x}_t (k-1) +\frac{1}{K} v_t (k)$
		\ENDFOR
		\STATE \textbf{Output:} $\tilde{x}_t =\tilde{x}_t (K)$
		\ENDFOR
	\end{algorithmic}
\end{algorithm}	
where for all $i\in[n]$, $t\in[m]$ and $k\in\{0,\dots,K\}$:
\begin{align}
\omega_{i,t}(k)&:=\big[[\tilde{x}_1(K)]_i,\dots,[\tilde{x}_{t-1}(K)]_i, [\tilde{x}_t(k)]_i,\underbrace{0,\dots,0}_{m-t~\text{times}}\big]^T\nonumber\\
[d_t(k)]_i&:=\nabla_t H_i \big(\omega_{i,t} (k)\big)+c_{i,t}G'_i \big(\hat{c}_i^T \omega_{i,t}(k)\big)\label{d}
\end{align} 
In the above definitions, we have used the notation $[u]_i$ to denote the $i$-th entry of the vector $u$ and $\nabla_t$ denotes the $t$-th entry of the gradient vector.\\
At each online step $t\in[m]$, the algorithm performs a total of $K$ Frank-Wolfe updates in its inner loop where in each of these updates, a linear maximization problem over the set $F_t$ is solved. Note that in our applications, $F_t$ is usually a box constraint or the simplex and therefore, the corresponding linear maximization problem could be solved efficiently. See \cite{lacoste2016convergence} for more details about using Frank-Wolfe for non-convex objectives.\\
The Generalized Sequential algorithm reduces to the Sequential algorithm in \cite{online-Eghbali} for $K=1$ and hence the name. Additionally, this algorithm could be interpreted as the online counterpart of the offline Frank-Wolfe variant proposed in \cite{dr-Bian} for solving offline constrained continuous DR-submodular optimization problems.
Note that at step $t\in[m],k\in[K]$, if we set $\hat{y}_i^*=\nabla H_i \big(\omega_{i,t} (k-1)\big)$ and $z_i^*=-G'_i(\hat{c}_i^T \omega_{i,t}(k-1))$, the update rule for $v_t(k)$ is similar to the KKT condition for $x_t^*$. In other words, the Generalized Sequential algorithm uses $\nabla H_i(\omega_{i,t}(k-1))$ and $-G'_i(\hat{c}_i^T \omega_{i,t}(k-1))$ as the current estimate of $\hat{y}_i^*$ and $z_i^*$ respectively and using them, the algorithm obtains $v_t(k)$ to improve the estimate of $x_t^*$.\\
We define:
\begin{align*}
{\rm ALG}&:= \sum_{i=1}^n H_i(\omega_{i,m}(K))\\
{\rm P_{gseq}} &:= \sum_{i=1}^n \big(H_i(\omega_{i,m}(K))+G_i(\hat{c}_i^T \omega_{i,m}(K))\big)
\end{align*}
${\rm ALG}$ and ${\rm P_{gseq}}$ are the objective value of problems \ref{eq:main-prob-con} and \ref{eq:main-prob-mod} at the end of the algorithm respectively. Note that since $0\in F_t~\forall t\in[m]$, whenever $d_t (k-1)\preceq 0$, the algorithm would assign zero to $v_t (k)$ and therefore, ${\rm P_{gseq}}\geq 0$.\\
\subsection{Competitive Ratio Analysis}
First, we remind the reader that $H_i~\forall i\in[n]$ are DR-submodular and not necessarily concave. On the other hand, $G_i~\forall i\in[n]$ are concave penalty functions. In order to derive the competitive ratio, we make the following smoothness assumption about the functions:\\
\textbf{Assumption 1:} For all $i\in[n]$, functions $H_i$ and $G_i$ have an $L$-Lipschitz gradient, i.e., for all $x\in K$ and $u\in \mathbb{R}_+^m$ where $u\succeq 0$ or $u\preceq 0$, the following holds:
\begin{equation}
H_i(x+u)-H_i(x)\geq \langle u, \nabla H_i (x)\rangle -\frac{L}{2}\|u\|^2\nonumber
\end{equation} 
Also, for all $x\in \mathbb{R}$ and $v\in \mathbb{R}$, we have:
\begin{equation}
G_i(x+v)-G_i(x)\geq v G'_i (x) -\frac{L}{2}v^2\nonumber
\end{equation}
We also define the parameter $\alpha$ as follows:
\begin{definition}\label{def:alpha}
	For all $i\in [n]$, $\alpha_{H_i}$ is defined as:
	\begin{align*}
	\alpha_{H_i} &:= \sup_{u: \hat{c}_i^T u\leq 1} \{\beta~|~H_i^*\big(\nabla H_i(u)\big)\geq \beta H_i(u)\}\\
	&= \sup_{u: \hat{c}_i^T u\leq 1} \{\beta~|~\langle \nabla H_i(u),u\rangle\geq (1+\beta)H_i(u)\}\\
	&= \inf_{u: \hat{c}_i^T u\leq 1}\frac{\langle \nabla H_i(u),u\rangle}{H_i(u)} -1
	\end{align*}
	Since $H_i$ is monotone non-decreasing, $0\leq \langle \nabla H_i(u),u\rangle$ holds. Additionally, because $H_i$ satisfies the DR property and $H_i(0)=0$, we have $\langle \nabla H_i(u),u\rangle \leq H_i(u)$. Thus, $-1 \leq \alpha_{H_i}\leq 0$ always holds.\\
\end{definition}
The definition above is inspired by the definition of $\alpha$ in \cite{online-Eghbali}. The parameter $\alpha$ characterizes the curvature of the function. In fact, $\alpha$ of the multilinear extension of a submodular function and the total curvature of the underlying submodular set function are related as follows:
\begin{remark}\label{rm1}
	\textbf{Connection between total curvature of a submodular function and $\alpha$:}\\
	Recall that for a non-negative normalized monotone non-decreasing submodular function $f: 2^V \rightarrow \mathbb{R}~\forall i\in [n]$, total curvature is defined as \cite{curvature-Conforti}:
	\begin{equation}\label{eq:curv}
	\kappa_{f}=1- \min_{j:f(j)\neq 0} \frac{f(j|(V\setminus j))}{f(j)}=1-\min_{S\subset V\setminus\{j\}, f(j)\neq 0}\frac{f(j|S)}{f(j)}\nonumber
	\end{equation}
	If we denote the multilinear extension of this function by $F$, the following holds:
	\begin{equation*}
		\alpha_{F} \geq -\kappa_{f}
	\end{equation*}
	See Appendix C for the proof.
\end{remark}

If $n=1$, we design the penalty function $G_1$ as follows:
\[  G_1(u) = 
\begin{cases}
-L_1 u &\quad\text{if } 0\leq u< \frac{1}{\ln\big(\frac{U_1 e}{L_1}\big)} \\
-\frac{1}{\ln\big(\frac{U_1 e}{L_1}\big)}\frac{L_1}{e}(\frac{U_1 e}{L_1})^u &\quad\text{if } u\geq \frac{1}{\ln\big(\frac{U_1 e}{L_1}\big)} \\
\end{cases}
\] 
If $n>1$, for all $i\in[n]$, we define the penalty function $G_i$ in the following:
\begin{align*}
G_i(u)&=\frac{L_i}{(e-1)\ln(1+\frac{U_i(e-1)}{L_i})}\big(1-(1+\frac{U_i(e-1)}{L_i})^u\big)+\frac{L_i}{e-1}u
\end{align*}
where for all $i\in[n]$, $U_i$ and $L_i$ are defined as follows:
\begin{align*}
U_i &= \max_{t\in[m]}\frac{\sup_{x\in \mathbb{R}^m:~\hat{c}_i^T x=1}\nabla_t H_i (x)}{c_{i,t}}\\
L_i &= \min_{t\in[m]}\frac{\inf_{x\in \mathbb{R}^m:~\hat{c}_i^T x\leq 1}\nabla_t H_i (x)}{c_{i,t}}
\end{align*}
Roughly speaking, $U_i$ and $L_i$ are upper and lower bounds for the value-to-weight ratio of the items arriving online respectively. We are assuming that these upper and lower bounds are available offline to design the penalty functions.
Our design for the penalty function for $n=1$ is inspired by the threshold function proposed by \cite{knapsack-Zhou}. In the $n>1$ case, our penalty functions are inspired by the allocation rule of the primal-dual algorithm for the Adwords problem \cite{adwords-Buchbinder}. In both cases, the penalty functions are designed such that for $u\geq 1$, the algorithm assigns zero and thus, $G'_i (1)= -U_i$ holds. Thus, the algorithm's assignments would not violate the budget constraints.\\
If we denote the optimal values of the original constrained problem \ref{eq:main-prob-con} and its dual problem \ref{eq:dual} by ${\rm OPT}$ and ${\rm D}^*$ respectively, ${\rm OPT}\leq {\rm D}^*$ holds due to weak duality.\\
Now, we have all the required tools to obtain the competitive ratio bounds.
\begin{theorem}\label{thm1}
	For $n>1$, if Assumption 1 holds and $K\rightarrow \infty$, then for the Generalized Sequential algorithm, we have:
	\begin{equation}
	\frac{\rm ALG}{\rm OPT} \geq \frac{\rm ALG}{\rm D^*}\geq \big({\max_{i\in[n]}\big\{-\alpha_{H_i}+\ln(1+\frac{U_i(e-1)}{L_i})\frac{e}{e-1}\big\}}\big)^{-1}
	\end{equation}
	This bound is tight in several known special cases. For the Adwords problem, since $U_i =L_i =1$ and $\alpha_{H_i}=0$ for all $i\in[n]$, competitive ratio of $1-\frac{1}{e}$ is obtained which is optimal \cite{adwords-Mehta}. 
	Additionally, for online linear programming, considering that $\alpha_{H_i}=0~\forall i\in[n]$, we obtain $\big({\max_{i\in[n]}\ln(1+\frac{U_i(e-1)}{L_i})}\big)^{-1}\times(1-\frac{1}{e})$ as the competitive ratio bound which is known to be optimal \cite{online-Eghbali,onlinecp-Buch}.
\end{theorem}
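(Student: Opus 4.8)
The approach is the standard primal-dual / online potential argument: I will track the change in the primal objective $\mathrm{P_{gseq}}$ (the penalized objective) against a simultaneously-maintained feasible dual solution, and show that the per-step primal increase dominates a $c$-fraction of the dual increase for $c = \big(\max_i\{-\alpha_{H_i} + \ln(1+U_i(e-1)/L_i)\frac{e}{e-1}\}\big)^{-1}$. First I would fix, at every online step $t$ and inner Frank–Wolfe iteration $k$, the candidate dual variables suggested by the KKT conditions: $\hat y_i \leftarrow \nabla H_i(\omega_{i,t}(k-1))$ and $z_i \leftarrow -G_i'(\hat c_i^T\omega_{i,t}(k-1))$. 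By the design of the $G_i$ (monotone non-increasing, and with $G_i'(1) = -U_i$) these $z_i$ are non-negative, so the dual point is feasible for \eqref{eq:dual}. The key structural facts I will invoke are: (i) DR-submodularity gives concavity of $H_i$ along the non-negative direction $\frac1K v_t(k)$, hence with Assumption 1, $H_i(\omega_{i,t}(k)) - H_i(\omega_{i,t}(k-1)) \ge \frac1K\langle v_t(k), \nabla_t H_i\rangle e_t - \frac{L}{2K^2}\|v_t(k)\|^2$; (ii) the analogous smoothness bound for $G_i$; (iii) $\|v_t(k)\|_2 \le \lambda$, so the $O(1/K^2)$ error terms sum to $O(1/K)$ over all $mK$ inner steps and vanish as $K\to\infty$.

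**Main steps.** (1) Telescoping the primal: sum the two smoothness inequalities over $i$, over $k=1,\dots,K$, and over $t=1,\dots,m$ to get $\mathrm{P_{gseq}} \ge \sum_{t,k}\frac1K\langle v_t(k), d_t(k-1)\rangle + o(1)$, where $d_t(k-1)$ is exactly the direction vector of \eqref{d}. (2) Lower-bound the dual objective evaluated at the final dual point $(\hat y_i^{\mathrm{final}}, z_i^{\mathrm{final}})$: by the optimality of $v_t(k) = \argmax_{v\in F_t}\langle v, d_t(k-1)\rangle$ we have $\langle v_t(k), d_t(k-1)\rangle \ge \sigma_{F_t}$ evaluated at the corresponding argument once we pass $K\to\infty$ and identify the Riemann sum $\frac1K\sum_k$ with an integral along the Frank–Wolfe path; more precisely, I will show $\sum_{t,k}\frac1K\langle v_t(k), d_t(k-1)\rangle \ge \sum_t \sigma_{F_t}(\cdot) - \sum_i\big(H_i^*(\hat y_i^{\mathrm{final}}) - (\text{terms involving }G_i)\big)$, reconstructing the dual objective \eqref{eq:dual} up to the $\alpha$- and $G$-dependent slack. (3) The differential inequality: the whole point of the specific closed form of $G_i$ is that it is the solution of an ODE making the ratio between the primal gain and the dual cost (per unit of "budget consumed" $\hat c_i^T\hat x_i$) at least a fixed constant; I will verify that with this $G_i$, for every $u\in[0,1]$, $-G_i'(u)\cdot(\text{dual increment}) \le \big(\ln(1+U_i(e-1)/L_i)\frac{e}{e-1}\big)\cdot(\text{primal increment})$, and combine with the factor $(1-(-\alpha_{H_i}))$ coming from $H_i^*(\nabla H_i(u)) \ge \alpha_{H_i} H_i(u)$ (Definition \ref{def:alpha}) to get the combined constant $-\alpha_{H_i} + \ln(1+U_i(e-1)/L_i)\frac{e}{e-1}$. (4) Chain everything: $\mathrm{ALG} \ge \mathrm{P_{gseq}}$ minus the $G$-penalty, but since the $G_i$ are nonpositive and the final assignments respect $\hat c_i^T\hat x_i \le 1$ (again by $G_i'(1)=-U_i$ forcing $v_t(k)=0$ once the budget is spent), the penalty is controlled; then $\mathrm{D^*} \le \mathrm{dual\ objective\ at\ our\ point} \le c^{-1}\,\mathrm{ALG}$, and weak duality $\mathrm{OPT}\le\mathrm{D^*}$ closes the loop.

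**Where the work is.** The routine part is the two telescoping smoothness estimates and the vanishing of the $O(1/K)$ discretization error; I would state these quickly. The genuine obstacle is Step (3): establishing that the particular exponential $G_i$ yields the claimed per-step constant, i.e., carrying out the ODE/Gronwall-type argument that ties $-G_i'$, $G_i$, $U_i$, $L_i$ together and shows the worst case is achieved by the linear-objective (Adwords-like) instance. This requires being careful about the two regimes of $G_i$ (when $n=1$) or about the exact algebra of $1+U_i(e-1)/L_i$ (when $n>1$), and about how the curvature parameter $\alpha_{H_i}$ interacts additively rather than multiplicatively with the $\ln$ term — the latter is exactly the novelty over the linear case and is where the bound $H_i^*(\nabla H_i(u)) \ge \alpha_{H_i} H_i(u)$ must be inserted at precisely the right point in the dual-objective reconstruction. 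A secondary subtlety is justifying the passage $K\to\infty$ rigorously, i.e., that the Frank–Wolfe Riemann sums converge to the integral defining the continuous DR-submodular "concave envelope" gain of $1 - 1/e$ in the unit-budget case; I would handle this by a uniform-continuity argument on $\nabla H_i$ and $G_i'$ over the compact reachable set $\{x : \|x\|\le m\lambda\}$.
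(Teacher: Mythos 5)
Your overall architecture matches the paper's proof: telescope the penalized primal $P_{\rm gseq}$ using Assumption 1 and the directional concavity of DR-submodular functions, identify $\langle v_t(k),d_t(k-1)\rangle=\sigma_{F_t}(d_t(k-1))$ from the update rule, evaluate the dual \ref{eq:dual} at the KKT-suggested point, insert $H_i^*(\nabla H_i(u))\geq\alpha_{H_i}H_i(u)$ from Definition \ref{def:alpha} together with the algebraic identity $-G_i'(u)+\gamma_i G_i(u)=\frac{L_i\gamma_i}{e-1}u$, $\gamma_i=\ln(1+\frac{U_i(e-1)}{L_i})$, satisfied by the designed penalty (your step (3)), and close with weak duality. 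You have also correctly located the two places where the real work sits.

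There is, however, one step where your stated mechanism would fail. The dual must be evaluated at a single feasible point, which the paper takes to be the \emph{final} iterate, $\hat y_i=\nabla H_i(\omega_{i,m}(K))$ and $z_i=-G_i'(\hat c_i^T\omega_{i,m}(K))$, so the dual objective contains $\sum_t\sigma_{F_t}(d_t(K))$ with $d_t(K)$ built from gradients at a point in which all coordinates $t+1,\dots,m$ are already filled in. The primal telescoping instead produces $\frac1K\sum_k\sigma_{F_t}(d_t(k-1))$ with $d_t(k-1)$ built from $\omega_{i,t}(k-1)$, which is zero on every future coordinate. These quantities do not converge to one another as $K\to\infty$: the discrepancy comes from future arrivals, not from discretization, so no uniform-continuity or Riemann-sum argument closes it. What closes it is the antitone-gradient form of the DR property: $\omega_{i,m}(K)\succeq\omega_{i,t}(k-1)$ gives $d_t(K)\preceq d_t(k-1)$ (using also concavity of $G_i$), and since $F_t\subseteq\mathbb{R}_+^n$ the support function is monotone, whence $\sigma_{F_t}(d_t(K))\leq\sigma_{F_t}\big(\frac1K\sum_{k}d_t(k-1)\big)\leq\frac1K\sum_k\sigma_{F_t}(d_t(k-1))$. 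This is the central use of DR-submodularity in the argument beyond the directional concavity you list in (i); the $K\to\infty$ passage only absorbs the $O(Lm\lambda^2/K)$ smoothness error. A smaller omission: after the $G_i$ identity you are left with $\frac{L_i\gamma_i}{e-1}\hat c_i^T\omega_{i,m}(K)$, and you need the mean-value-theorem bound $H_i(\omega_{i,m}(K))\geq L_i\,\hat c_i^T\omega_{i,m}(K)$ (the paper's inequality \ref{ell}), together with $P_{\rm gseq}\geq0$ to control the $-(\gamma_i-1)G_i$ term, before everything becomes a multiple of ${\rm ALG}$; your ``per unit of budget consumed'' remark is the right instinct but this conversion must be made explicit.
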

\begin{proof}
	See Appendix D for the proof.
\end{proof}
\begin{remark}
	For $n>1$, if we allow all the linear packing constraints to be violated by at most $\epsilon$, by modifying the penalty function for all $i\in[n]$ to
	\begin{equation*}
	G_i(u)=\frac{L_i (1+\epsilon)}{(e-1)\ln(1+\frac{U_i(e-1)}{L_i})}\big(1-(1+\frac{U_i(e-1)}{L_i})^{\frac{u}{1+\epsilon}}\big)+\frac{L_i}{e-1}u
	\end{equation*}
	competitive ratio improves to $(1+\epsilon)\times\big({\max_{i\in[n]}\big\{-(1+\epsilon)\alpha_{H_i}+\ln(1+\frac{U_i(e-1)}{L_i})\frac{e}{e-1}\big\}}\big)^{-1}$
\end{remark}
\begin{theorem}\label{thm2}
	For $n=1$, if Assumption 1 holds and $K\rightarrow \infty$, then for the Generalized Sequential algorithm, we have:
	\begin{equation}
	\frac{\rm ALG}{\rm OPT}\geq \frac{\rm ALG}{\rm D^*}\geq \frac{1}{1-\alpha_{H_1}+\ln(\frac{U_1}{L_1})}
	\end{equation}
	For the online linear knapsack problem, since $\alpha_{H_1}=0$, competitive ratio of $\frac{1}{1+\ln(\frac{U_1}{L_1})}$ is obtained which is optimal \cite{knapsack-Zhou} (note that because we allow fractional assignments, we do not need the small bids assumption to obtain the optimal competitive ratio).
\end{theorem}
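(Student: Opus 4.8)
The plan is a primal--dual argument built on weak duality: I would exhibit a single feasible point of the dual problem \ref{eq:dual} whose objective value is at most $\big(1-\alpha_{H_1}+\ln(\tfrac{U_1}{L_1})\big){\rm ALG}$. Since ${\rm OPT}\le{\rm D}^*$ and ${\rm D}^*$ is no larger than the value at any feasible dual point, this yields the bound. Guided by the KKT conditions, the point I would use is the one read off from the final iterate: $\hat y_1=\nabla H_1\big(\omega_{1,m}(K)\big)$ and $z_1=-G_1'\big(\hat c_1^T\omega_{1,m}(K)\big)$; concavity and monotonicity of $G_1$ give $z_1\ge0$, so it is dual feasible. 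A preliminary observation is that, because $-G_1'(1)=U_1$ dominates every value-to-weight ratio and $-G_1'$ is non-decreasing, the algorithm never drives $\hat c_1^T\omega_{1,t}(k)$ above $1$ (in the $K\to\infty$ limit); hence $s:=\hat c_1^T\omega_{1,m}(K)\le1$ and $\omega_{1,m}(K)$ lies in the region over which $\alpha_{H_1}$ is defined in Definition \ref{def:alpha}.

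The first step bounds the support-function part $\sum_{t=1}^m\sigma_{F_t}(y_{1,t}-z_1c_{1,t})$ of the dual objective. Because $\nabla H_1$ is anti-tone (the DR property, Definition \ref{def:dr}) and $G_1'$ is non-increasing (concavity of $G_1$), and because $\omega_{1,m}(K)\succeq\omega_{1,t}(k-1)$ with $\hat c_1^T\omega_{1,m}(K)\ge\hat c_1^T\omega_{1,t}(k-1)$ for every $t$ and every inner index $k$, one obtains
\begin{equation*}
y_{1,t}-z_1c_{1,t}=\nabla_tH_1\big(\omega_{1,m}(K)\big)+c_{1,t}G_1'\big(\hat c_1^T\omega_{1,m}(K)\big)\ \le\ \big[d_t(k-1)\big]_1 .
\end{equation*}
Since $F_t\subseteq\mathbb{R}_+^n$ and $v_t(k)$ maximizes $\langle\cdot,d_t(k-1)\rangle$ over $F_t$, this gives $\sigma_{F_t}(y_{1,t}-z_1c_{1,t})\le\langle v_t(k),d_t(k-1)\rangle$ for every $k$, hence for the average over $k=1,\dots,K$. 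Using Assumption 1 for $H_1$ and for $G_1$ to control the second-order terms (the step sizes are $O(1/K)$ and $\|x\|_2\le\lambda$ on $F_t$), each $\tfrac1K\langle v_t(k),d_t(k-1)\rangle$ is at most $\big(H_1(\omega_{1,t}(k))-H_1(\omega_{1,t}(k-1))\big)+\big(G_1(\hat c_1^T\omega_{1,t}(k))-G_1(\hat c_1^T\omega_{1,t}(k-1))\big)$ up to an $O(1/K^2)$ error; summing over $k$, telescoping over $t$ (using $\omega_{1,t}(K)=\omega_{1,t+1}(0)$ and $\omega_{1,1}(0)=0$) and letting $K\to\infty$ gives $\sum_{t=1}^m\sigma_{F_t}(y_{1,t}-z_1c_{1,t})\le{\rm P_{gseq}}={\rm ALG}+G_1(s)$.

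The second step handles the remaining terms. By definition of the concave conjugate and of $\alpha_{H_1}$, $-H_1^*(\hat y_1)=H_1(\omega_{1,m}(K))-\langle\nabla H_1(\omega_{1,m}(K)),\omega_{1,m}(K)\rangle\le-\alpha_{H_1}{\rm ALG}$, so the value of the chosen dual point is at most $(1-\alpha_{H_1}){\rm ALG}+G_1(s)+z_1=(1-\alpha_{H_1}){\rm ALG}+\big(G_1(s)-G_1'(s)\big)$. The crux is $G_1(s)-G_1'(s)\le\ln(\tfrac{U_1}{L_1}){\rm ALG}$, which requires a lower bound on ${\rm ALG}$ in terms of $s$. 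For this I would pass to the continuous ($K\to\infty$) Frank-Wolfe flow: it assigns mass to coordinate $t$ only while $[d_t]_1\ge0$, i.e.\ while $\nabla_tH_1/c_{1,t}\ge-G_1'$, so along the trajectory $\tfrac{d}{ds}H_1\ge-G_1'(s)$ wherever $s$ increases, giving ${\rm ALG}\ge\int_0^s(-G_1'(\tau))\,d\tau$. Plugging in the explicit penalty and using $\ln(\tfrac{U_1e}{L_1})=1+\ln(\tfrac{U_1}{L_1})$, when $s$ is in the exponential branch one computes $\int_0^s(-G_1')\,d\tau=z_1/\ln(\tfrac{U_1e}{L_1})$ and $G_1(s)-G_1'(s)=z_1\,\ln(\tfrac{U_1}{L_1})/\ln(\tfrac{U_1e}{L_1})$, which combine to give exactly $G_1(s)-G_1'(s)\le\ln(\tfrac{U_1}{L_1}){\rm ALG}$; the other case, $s$ in the linear branch (the threshold $-G_1'$ never exceeding the minimal ratio $L_1$), is easy because there the algorithm assigns the maximal feasible point and is already optimal. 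Assembling, ${\rm OPT}\le{\rm D}^*\le(1-\alpha_{H_1}+\ln(\tfrac{U_1}{L_1})){\rm ALG}$.

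The main obstacle I anticipate is making the $K\to\infty$ limit rigorous: both $\sum_t\sigma_{F_t}\le{\rm P_{gseq}}$ and ${\rm ALG}\ge\int_0^s(-G_1')$ are limits of Riemann sums whose error terms must be controlled uniformly through the Lipschitz-gradient Assumption 1 and the bound $\|x\|_2\le\lambda$, and the ``bang-per-buck $\ge$ threshold'' step needs care where the consumed budget is locally constant and where the linear-maximization oracle is indifferent (ties broken toward larger assignments, or value-to-weight ratios never attaining $L_1$ along the trajectory). The explicit-penalty computations are routine once this framework is set up, and the online-linear-knapsack ratio $\tfrac{1}{1+\ln(U_1/L_1)}$ follows by substituting $\alpha_{H_1}=0$.
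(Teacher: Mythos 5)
Your proposal is correct and follows essentially the same route as the paper's proof: weak duality applied to the dual point $\hat y_1=\nabla H_1(\omega_{1,m}(K))$, $z_1=-G_1'(\hat c_1^T\omega_{1,m}(K))$, the Frank--Wolfe telescoping bound $\sum_t\sigma_{F_t}\le {\rm P_{gseq}}+O(1/K)$, the $\alpha_{H_1}$ bound on the conjugate term, and the identity $G_1'(u)=\ln(\tfrac{U_1e}{L_1})G_1(u)$ on the exponential branch. The only cosmetic differences are that you re-derive the key inequality $-G_1(s)\le{\rm ALG}$ via the continuous-flow threshold argument where the paper simply invokes ${\rm P_{gseq}}\ge 0$, and that you explicitly treat the linear-branch case, which the paper leaves implicit.
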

\begin{proof}
	See Appendix E for the proof.
\end{proof}
\begin{remark}
	For $n=1$, if we allow the linear packing constraint to be violated by at most $\epsilon$, modifying the penalty function to
	\[  G_1(u) = 
	\begin{cases}
	-L_1 u &\quad\text{if } 0\leq u< \frac{1+\epsilon}{\ln\big(\frac{U_1 e}{L_1}\big)} \\
	-\frac{1+\epsilon}{\ln\big(\frac{U_1 e}{L_1}\big)}\frac{L_1}{e}(\frac{U_1 e}{L_1})^{\frac{u}{1+\epsilon}} &\quad\text{if } u\geq \frac{1+\epsilon}{\ln\big(\frac{U_1 e}{L_1}\big)} \\
	\end{cases}
	\] 
	we obtain the improved competitive ratio of $\frac{1+\epsilon}{-(1+\epsilon)\alpha_{H_1}+\ln(\frac{U_1e}{L_1})}$
\end{remark}
Theorems \ref{thm1} and \ref{thm2} provide the first competitive ratio bounds that generalize the results of \cite{online-Azaretal,online-Eghbali} for the concave case to general continuous DR-submodular objective functions which are not necessarily concave.
\section{Experiments}
We defined $F_t=\{x\in \mathbb{R}^n:0\preceq x\preceq \bf{1}\}$ for all $t\in[m]$ and we randomly generated monotone non-convex/non-concave quadratic functions of the form $F(x)=\frac{1}{2}x^T Hx+h^T x$ (see \ref{examples}) where $H\in \mathbb{R}^{m\times m}$ is a random matrix with uniformly distributed non-positive entries in $[-100,0]$ and $h=-H^T \bf{1}$ to make the gradient non-negative. Therefore, the utility functions are of the form $F(x)=(\frac{1}{2}x-{\bf{1}})^T Hx$. We set the linear packing constraints to be of the form $Cx\preceq \bf{1}$ where $C\in \mathbb{R}^{n\times m}$ has uniformly distributed entries in $[0,1]$. We set $m=100$ and $K=50$. For all $i\in[n]$, the lower and upper bounds $L_i$ and $U_i$ were optimized by the input data. We ran the Generalized Sequential algorithm for both cases of $n=1$ and $n>1$ (note that the penalty function defined in these two cases were different) and in order to compute the competitive ratio, we divided the output of the algorithm to the offline optimal solution computed by the Frank-Wolfe variant algorithm of \cite{dr-Bian} with $K=50$. The average performance of the Generalized sequential algorithm over $10$ repeated experiments is summarized in Table \ref{table:1}. All codes were implemented in Python $3.7$ and the program was executed on a standard laptop computer ($2.30$GHz CPU, $16.0$GB Memory).
\begin{table}[h]
	\begin{subtable}{.5\linewidth}
	\centering
	\begin{tabular}{|c|c|} 
	\hline
	\text{Quantity} & \text{Value} ($\%$)\\ [0.5ex] 
	\hline
	\text{Competitive Ratio} & 64.33\\ 
	\hline
	\text{Budget Usage} & 74.95 \\
	\hline
	\end{tabular}
\caption{ $n=1, m=100, K=50$}
	\end{subtable}
	\begin{subtable}{.5\linewidth}
		\centering
		\begin{tabular}{|c|c|} 
			\hline
			\text{Quantity} & \text{Value} ($\%$)\\ [0.5ex] 
			\hline
			\text{Competitive Ratio} & 58.27\\ 
			\hline
			\text{Budget $1$ Usage} & 65.68 \\
			\hline
			\text{Budget $2$ Usage} & 58.06 \\
			\hline
			\text{Budget $3$ Usage} & 66.83 \\
			\hline
			\text{Budget $4$ Usage} & 65.11 \\
			\hline
			\text{Budget $5$ Usage} & 74.75 \\
			\hline
		\end{tabular}
		\caption{ $n=5, m=100, K=50$}
	\end{subtable} 
	\caption{Performance of the Generalized Sequential Algorithm}
	\label{table:1}
\end{table}
Table \ref{table:1} shows that the output of the Generalized Sequential algorithm is not using all of the available budget which is natural in the adversarial input model. In other words, considering that no information about the online input is available, in order to attain a guaranteed competitive ratio, the algorithm needs to be overly cautious so that it does not miss valuable items that are arriving in the later steps due to exhausting all of the budget in the earlier stages.
\section{Related Work}
\textbf{Offline submodular maximization.} Consider the problem $\max_{x\in \mathcal{P}}F(x)$ where $F:\mathbb{R}^m \rightarrow \mathbb{R}$ is a non-negative monotone DR-submodular function and $\mathcal{P}$ is a down-closed convex set in the positive orthant. In \cite{drmax-Calinescu}, the special case of continuous relaxation of the discrete submodular function maximization problem subject to a matroid constraint is considered, the following variant of the Frank-Wolfe algorithm called the Continuous Greedy is used and a $(1-\frac{1}{e})$ approximation ratio is obtained. As it was mentioned in section \ref{examples}, the multilinear extension of a submodular function satisfies the DR property.
\[\begin{array}{ll}
dy/dt&=v_{\rm max}(y)\\
v_{\rm max}(y)&=\argmax_{v\in \mathcal{P}}\langle v,\nabla F(y)\rangle
\end{array}
\]
In this algorithm, $y(1)=\int_0^1 v_{\rm max}(y(\tau))d\tau$ is the output. Our Generalized Sequential algorithm is in fact the online counterpart of the discretized Continuous Greedy algorithm (i.e., K is the number of discrete steps in our algorithm)\\
\cite{dr-Bian} obtained a similar approximation ratio for general continuous DR-submodular functions. \cite{knapsack-Kulik} also exploited the same algorithm to obtain a $(1-\frac{1}{e})$ approximation ratio for submodular maximization subject to multiple linear constraints. Later on, the Continuous Greedy algorithm has been generalized to obtain approximation ratios for both monotone and non-monotone continuous submodular functions \cite{dr-Bian,gsubmax-chekuri,mwu-Chekuri,gsubmax-Feldman,gsubmax-Ene,gsubmax-Buch}. See \cite{survey-Krause,survey-Buchbinder} for a thorough overview of offline submodular maximization problems and algorithms.\\
\textbf{Online knapsack problem.} Consider the problem $\max_{x:c^Tx\leq b} f(x)$ where $x=[x_1,\dots,x_m]^T\in \{0,1\}^m$. In the online setting, at step $t\in[m]$, $t$-th item arrives and $c_t$ along with the value of the function $f$ over subsets of $\{1,\dots,t\}$ is revealed. The algorithm should decide whether to choose this item. \cite{knapsack-Marchetti} showed that in the adversarial setting, there exists no online algorithm achieving any non-trivial competitive ratio for this problem. \cite{knapsack-Zhou} considered the case where $f(x)=d^T x;~d\in\mathbb{R}^m$ and proved that under the additional assumptions that for all $t\in[m]$, $c_t \ll b$ and $L\leq \frac{d_t}{c_t}\leq U$, there exists an algorithm that achieves the competitive ratio of $\frac{1}{1+ln(\frac{U}{L})}$ and is provably optimal. \cite{knapsack-Maehara} generalized this algorithm for the case that the function $f$ is submodular and obtained a $\frac{1}{(1+\kappa_f+O(\epsilon))(1+ln(\frac{U}{L}))}$ competitive ratio where $L\leq \frac{f(t|S)}{c_t}\leq U~\forall t\in[m],S\subset \{1,\dots,m\}\setminus \{t\}$ and $\kappa_f$ is the total curvature of $f$ \cite{curvature-Conforti}. Note that if we apply our Generalized Sequential algorithm for $n=1$ to the multilinear extension of the function $f$ (which we denote by $F$) and allow fractional assignments of items, we obtain the competitive ratio $\frac{1}{1-\alpha_{F}+\ln(\frac{U}{L})}$ and because $\alpha_F \geq -\kappa_f$ holds by Remark \ref{rm1}, our bound improves upon the result of \cite{knapsack-Maehara}.\\
\textbf{Submodular secretary problems.} In this class of problems introduced by \cite{secretary-Bateni,secretary-Gupta}, $m$ items are presented to the algorithm in random order. Upon arrival of an item, the algorithm should irrevocably decide whether to accept the current item. The goal is to maximize a monotone submodular function $f:\{0,1\}^m \rightarrow \mathbb{R}$ subject to cardinality, matching or linear packing constraints. See \cite{secretary-Kesselheim,survey-Krause} for a comprehensive overview of submodular secretary problems. Note that in the submodular secretary problem, the input is assumed to be stochastic while in our framework, the adversarial input model has been considered.
\section{Conclusion}
In this paper, we considered a class of online optimization problems, where the objective function is monotone DR-submodular under linear packing constraints. We specified various online discrete submodular problems whose continuous generalization could be cast in our framework (see Appendix A). We proposed the Generalized Sequential algorithm for solving such problems and we obtained competitive ratio bounds for this algorithm. Finally, we demonstrated the effectiveness of our algorithm through numerical experiments on a certain class of continuous DR-submodular functions.
\appendix
\addcontentsline{toc}{section}{Appendices}
\section*{Appendices}
\section{Motivating Applications}
There are a number of online budgeted discrete submodular problems whose continuous generalization could be cast in our framework. We have listed a number of these applications below:\\
\textbf{Online Knapsack Constrained Continuous DR-submodular Maximization.} In the discrete problem considered in \cite{knapsack-Maehara}, there is a ground set of elements $V$ and a budget constraint $b \in \mathbb{R}_+$. At step $t\in [m]$, an element $v\in V$ with the corresponding cost $c(v)\in \mathbb{R}_+$ arrives online and we should decide whether to choose $v$. The overall objective is as follows:
\[
\begin{array}{ll}
\mbox{maximize}& f(V')\\
\mbox{subject to}& \sum_{v \in V'} c(v)\leq b\\
\end{array}
\]
where $f:2^V \rightarrow \mathbb{R}_+$ is a monotone non-decreasing submodular function and $V'$ is the set of chosen elements. Note that at each step, value of the function is only known over subsets of items which have already arrived.\\
Consider the continuous relaxation of this problem where at each step, we are allowed to take a fraction of the arriving element. This problem could be formulated as:
\[
\begin{array}{ll}
\mbox{maximize}& F(x)\\
\mbox{subject to}& \sum_{t=1}^m c_t x_t\leq b\\
&0\leq x_t \leq 1 ~\forall t\in [m]\\
\end{array}
\]
where $x=[x_1,\dots,x_m]^T$, $c_t \in \mathbb{R}_+$ is the cost corresponding to the $t$-th arriving element and $F:[0,1]^m \rightarrow \mathbb{R}_+$ is the multilinear extension of the function $f$.\\
\textbf{Online Generalized Maximum Coverage Problem.} In this problem, there are $m$ subsets $C_1,\dots,C_m$ of the ground set $V$ with corresponding costs $c_1,\dots,c_m$ that are arriving one by one. At step $t\in[m]$, subset $C_t$ could be chosen with confidence level $x_t \in[0,1]$ and the set of covered elements when choosing $C_t$ with confidence $x_t$ is modeled with a monotone normalized covering function $p_t: [0,1]\to 2^{C_t}$ which is not known in advance and is revealed online. The goal is to choose subsets from $C_1,\dots,C_m$ with confidence level to maximize the overall number of covered elements $|\bigcup_{t=1}^m p_t (x_t)|$ while satisfying the budget constraint $\sum_{t=1}^m c_tx_t\leq b$. The problem could be formulated as follows:
\[
\begin{array}{ll}
\mbox{maximize}& |\bigcup_{t=1}^m p_t (x_t)|\\
\mbox{subject to}& \sum_{t=1}^m c_t x_t\leq b\\
&0\leq x_t \leq 1 ~\forall t\in [m]\\
\end{array}
\]
\textbf{Online Continuous DR-submodular Welfare Maximization.} In the submodular welfare problem, there is a set $M=\{1,\dots,m\}$ of $m$ items and a set $N=\{1,\dots,n\}$ of $n$ agents. Each agent $i\in N$ has a valuation function $f_i :2^M \rightarrow \mathbb{R}_+$ over subsets of items. Valuation functions are assumed to be submodular and monotone non-decreasing. In this problem, the goal is to partition the items among the agents as $S=(S_1,\dots,S_n)$, where $S_s \cap S_t =\emptyset~ \forall s,t \in N$ and $\cup_{s=1}^n S_s =M$, in a way that the value of the partition $f(S)=\sum_{i=1}^n f_i (S_i)$ is maximized \cite{swm-Vondrak}. Now, consider the continuous relaxation of this problem in the online setting: Each agent has a valuation function $F_i :[0,1]^M \rightarrow \mathbb{R}_+$ which is the multilinear extension of the submodular function $f_i$. At step $t\in M$, item $t$ arrives and the valuations of agents over subsets of items $\{1,\dots,t\}$ are accessible. The algorithm should assign item $t$ fractionally among the agents to maximize the aggregate valuation. The problem could be written as:
\begin{equation*}
	\begin{array}{ll}
		\mbox{maximize}& \sum_{i=1}^n f_i (\hat{x}_i)\\
		\mbox{subject to}& \sum_{i=1}^n x_{it}\leq 1 ~\forall t\in M\\
		&x_{it}\geq 0~\forall i\in[n],t\in[m]
	\end{array}
\end{equation*}
where $\hat{x}_i =[x_{i1},\dots,x_{im}]^T$. Note that in this problem, there are no budget constraints.\\
\textbf{Online DR-Submodular Generalized Assignment Problem.} In this problem, there are $n$ bins and $m$ items. Each bin $i\in[n]$ has an associated collection of feasible sets given by the knapsack constraint $\mathcal{F}_i=\{S\subset [m]~:~\sum_{j\in S}c_{ij}\leq b_i\}$ and a monotone submodular valuation function $f_i:\{0,1\}^m \rightarrow \mathbb{R}_+$ which captures the diversity of the items in each bin. In the online setting, the set of items $t\in[m]$ arrive one by one and upon arrival of each item $t$, $c_{it}$ and values of the functions $f_i$ over subsets of $\{1,\dots,t\}$ for all $i\in[n]$ are revealed. The goal is to partition the items among the bins so as the aggregate valuation of the partition is maximized. If the valuation function $f_i$ is modular for all $i\in[n]$, this problem reduces to the Generalized Assignment Problem (GAP) \cite{swm-Vondrak}. Now, consider the continuous relaxation of this online problem where fractional assignments of items to bins are possible. The problem could be formulated as: 
\begin{equation*}
	\begin{array}{ll}
		\mbox{maximize}& \sum_{i=1}^n F_i (\hat{x}_i)\\
		\mbox{subject to}& \sum_{i=1}^n x_{it}\leq1 ~\forall t\in [m]\\
		&\sum_{t=1}^m c_{it}x_{it}\leq b_i ~\forall i\in [n]\\
	\end{array}
\end{equation*}
where $\hat{x}_i =[x_{i1},\dots,x_{im}]^T$ and $F_i: [0,1]^m \rightarrow \mathbb{R}_+$ is the multilinear extension of the submodular valuation function $f_i$ of the $i$-th bin.\\
\section{Derivation of the Dual Problem}
Let \[  I_{F_t}(x) = 
\begin{cases}
0 &\quad\text{if } x\in F_t \\
\infty &\quad\text{o.w.}\\
\end{cases}
\] 
i.e., the convex indicator function of the set $F_t$.\\
Remember the offline constrained optimization problem:
\begin{equation}\label{eq:main-prob-con2}
	\begin{array}{ll}
		\mbox{maximize}& \sum_{i=1}^n H_i(\hat{x}_i)\\
		\mbox{subject to}& x_t \in F_t\subseteq \mathbb{R}_+^n~\forall t\in[m]\\
		&\hat{c}_i^T \hat{x}_i\leq 1~\forall i\in[n]
	\end{array}
\end{equation}
We derive the dual of problem \ref{eq:main-prob-con2} as follows:
\begin{align*}
	g(\hat{y}_i,z_i)&= \inf_{\hat{d}_i,\hat{e}_i,X} \sum_{i=1}^n -H_i (\hat{d}_i)+\sum_{i=1}^n\hat{y}_i^T (\hat{d}_i-\begin{bmatrix}x_{i,1} \\ \vdots \\ x_{i,m} \end{bmatrix})\\
	&+\sum_{i=1}^n z_i\big(\hat{c}_i^T \begin{bmatrix}x_{i,1} \\ \vdots \\ x_{i,m} \end{bmatrix}-1\big)+\sum_{t=1}^m I_{F_t} (x_t)\\
	&= \sum_{i=1}^n \inf_{\hat{d}_i} \big(\hat{y}_i^T \hat{d}_i-H_i (\hat{d}_i)\big)-\sum_{i=1}^n z_i\\
	&+\sum_{t=1}^m \inf_{x_t \in F_t} \big(I_{F_t}(x_t)-\langle \underbrace{\begin{bmatrix} y_{1,t}-z_1c_{1,t} \\ \vdots \\ y_{n,t}-z_n c_{n,t} \end{bmatrix}}_{v_t},x_t\rangle\big)\\
	&= \sum_{i=1}^n H_i^*(\hat{y}_i)-\sum_{i=1}^n z_i-\sum_{t=1}^m \sup_{x_t \in F_t} \big(\langle v_t,x_t\rangle-I_{F_t}(x_t)\big)\\
	&= \sum_{i=1}^n H_i^*(\hat{y}_i)-\sum_{i=1}^n z_i-\sum_{t=1}^m \sigma_{F_t} (v_t)
\end{align*}
where $\hat{y}_i=[y_{i,1},\dots,y_{i,m}]^T$, $\sigma_{F_t}(u)=\sup_{w \in F_t} w^T u$ is the support function of the set $F_t$ and $H_i^* (u)=\inf_{w} \big(w^Tu -H_i (w)\big)$ is the concave conjugate function of $H_i$.
Therefore, the dual problem is:
\[
\begin{array}{ll}
\mbox{minimize}& \sum_{t=1}^m \sigma_{F_t} \big(v_t\big)-\sum_{i=1}^n H_i^*(\hat{y}_i)+\sum_{i=1}^n z_i\\
\mbox{subject to}& z_i\geq 0 ~\forall i\in[n]\\
\end{array}
\]
\section{Connection between total curvature of a submodular function and $\alpha$}
First, note that $F$, i.e., the multilinear extension of the discrete submodular function $f$, satisfies the DR property and $F(0)=0$. Since $F$ is linear in each of its arguments, we can write:
\begin{equation}\label{eq:alph-c-1}
	\langle \nabla F(x),x\rangle =\sum_t \mathbb{E}_{S\sim x}[f(S\cup\{t\})-f(S\setminus\{t\})]x_t
\end{equation}
Depending on whether $t\in S$ or not, one of the terms $\big(f(S\cup\{t\})-f(S)\big)$ or $\big(f(S)-f(S\setminus\{t\})\big)$ would be zero. So, by definition of total curvature of $f$, i.e., $\kappa_f$, we have:
\begin{align}
	f(S\cup\{t\})-f(S\setminus\{t\})&=\big(f(S\cup\{t\})-f(S)\big)\nonumber\\
	&+\big(f(S)-f(S\setminus\{t\})\big)\nonumber \\
	&\geq (1-\kappa_{f})f(\{t\})\label{eq:alph-c-2}
\end{align}
Combining \ref{eq:alph-c-1} and \ref{eq:alph-c-2}, we have:
\begin{equation}\label{eq:alph-c-3}
	\langle \nabla F(x),x\rangle \geq (1-\kappa_{f})\sum_t x_t f(\{t\})
\end{equation}
Defining $\hat{x}_t=[x_1,\dots,x_t,0,\dots,0]^T$, we can write:
\begin{align*}
	F(x)&=\sum_t \big( F(\hat{x}_t)-F(\hat{x}_{t-1})\big)\\
	&= \sum_t x_t \nabla_t F(\hat{x}_{t-1})
\end{align*}
Since $f(\{t\})=F(1_t)=F(1_t)-F(0)=\nabla_t F(0)$, using the DR property of the function $F$, $\nabla_t F(0) \geq \nabla_t F(\hat{x}_{t-1})$ and therefore, we have:
\begin{equation}\label{eq:alph-c-4}
	F(x) \leq \sum_t x_t f(\{t\})
\end{equation}
Combining \ref{eq:alph-c-3} and \ref{eq:alph-c-4}, we conclude:
\begin{align*}
	\langle \nabla F(x),x\rangle &\geq (1-\kappa_{f})F(x)\\
	\frac{\langle \nabla F(x),x\rangle}{F(x)}&\geq (1-\kappa_{f})\\
	\alpha_{F} &\geq -\kappa_{f}
\end{align*}
As a corollary, since $\alpha_{F} \in [-1,0]$ and $\kappa_{f} \in [0,1]$, if $\kappa_{f} =0$ (i.e., $f$ is modular), we can conclude that $\alpha_{f} =0$ as well.
\section{Proof of Theorem 4.1}
For all $i\in[n]$, using the mean-value theorem, we have:
\begin{align*}
	H_i(\omega_{i,m}(K))&= \sum_{t=1}^m \big(H_i(\omega_{i,t}(K))-H_i(\omega_{i,t}(0))\big)\\
	&= \sum_{t=1}^m \tilde{x}_{i,t}\nabla_t H_i (u_t)
\end{align*}
where $u_t \in \mathbb{R}^m$ and $\omega_{i,t}(0)\preceq u_t \preceq \omega_{i,t}(K)$.
Thus, we can write:
\begin{equation}\label{ell}
	L_i \leq \frac{H_i(\omega_{i,m}(K))}{\hat{c}_i^T \omega_{i,m}(K)}\\
\end{equation}
Considering that $\|x\|_2 \leq \lambda$ holds for all $x\in F_t$ and $t\in[m]$, we can write:
\allowdisplaybreaks
\begin{align*}
	P_{\rm gseq} &= \sum_{i=1}^n \big(H_i(\omega_{i,m}(K))+G_i((\hat{c}_i^T \omega_{i,m}(K))\big)\\
	&= \sum_{i=1}^n \sum_{t=1}^m \big((H_i (\omega_{i,t}(K))-H_i (\omega_{i,t}(0))\big)+ \sum_{i=1}^n \sum_{t=1}^m \big(G_i (\hat{c}_i^T\omega_{i,t}(K))-G_i \hat{c}_i^T\omega_{i,t}(0))) \big)\\
	&= \sum_{i=1}^n \sum_{t=1}^m \sum_{k=1}^K \big((H_i (\omega_{i,t}(k))-H_i (\omega_{i,t} (k-1))\big)+ \sum_{i=1}^n \sum_{t=1}^m \sum_{k=1}^K\big(G_i ((\hat{c}_i^T\omega_{i,t}(k))-G_i (\hat{c}_i^T\omega_{i,t}(k-1))) \big)\\
	&\overset{\text{(a)}}\geq \sum_{i=1}^n \sum_{t=1}^m \sum_{k=1}^K \big( \frac{1}{K}[v_t (k)]_i \nabla_t H_i (\omega_{i,t} (k-1))-\frac{L}{2K^2}[v_t (k)]_i^2\big)\\
	&+ \sum_{i=1}^n \sum_{t=1}^m \sum_{k=1}^K \big( \frac{1}{K}c_{i,t}[v_t (k)]_i G'_i (\hat{c}_i^T\omega_{i,t} (k-1))-\frac{Lc_{i,t}^2}{2K^2}[v_t (k)]_i^2 \big)\\
	&\overset{\text{(b)}}\geq \sum_{i=1}^n \sum_{t=1}^m \sum_{k=1}^K \big( \frac{1}{K}[v_t (k)]_i \nabla_t H_i (\omega_{i,t} (k-1))\big)-\frac{Lm\lambda^2}{2K}\\
	&+ \sum_{i=1}^n \sum_{t=1}^m \sum_{k=1}^K \big( \frac{1}{K}c_{i,t}[v_t (k)]_i G'_i (\hat{c}_i^T\omega_{i,t} (k-1))\big)-\frac{Lm\lambda^2}{2K}\\
	&= \sum_{t=1}^m \sum_{k=1}^K \frac{1}{K} \langle v_t (k),d_t (k-1)\rangle-\frac{Lm\lambda^2}{K}\\
	&\overset{\text{(c)}}= \sum_{t=1}^m \sum_{k=1}^K \frac{1}{K} \sigma_{F_t}(d_t (k-1)) -\frac{Lm\lambda^2}{K}\\
	&\overset{\text{(d)}}\geq \sum_{t=1}^m \sigma_{F_t}(\frac{1}{K}\sum_{k=1}^K d_t (k-1)) -\frac{Lm\lambda^2}{K}
\end{align*}
where (a) is due to Assumption 1, (b) follows from $\|x\|_2 \leq \lambda~\forall x\in F_t, t\in[m]$, (c) uses the update rule of the Generalized Sequential algorithm and (d) is a result of subadditivity of the support function $\sigma_{F_t}$.\\
Using the DR assumption for $H_i$ and $G_i$, for all $i\in[n]$, $t\in[m]$ and $x\in F_t$, we can write: (we remind the reader that for scalar functions such as our concave penalty functions $G_i$, the DR property is equivalent to concavity)
\allowdisplaybreaks
\begin{align}
	d_t (K)&\preceq d_t (k-1)\nonumber\\
	d_t (K)&\preceq \frac{1}{K}\sum_{k=1}^K d_t (k-1)\nonumber\\
	x^Td_t (K)&\leq \frac{1}{K}x^T\sum_{k=1}^K d_t (k-1)\label{eq:thm-proof-k-2}
\end{align}
Taking supremum of $\ref{eq:thm-proof-k-2}$ over all $x\in F_t$, we obtain:
\begin{equation*}
	\sigma_{F_t} (d_t (K))\leq \sigma_{F_t}(\frac{1}{K}\sum_{k=1}^K d_t (k-1))\label{eq:thm-proof-k-3}
\end{equation*}
Therefore, we have:
\begin{align}
	{\rm ALG}&\geq \sum_{t=1}^m \sigma_{F_t}(d_t (K)) -\sum_{i=1}^n G_i(\hat{c}_i^T \omega_{i,m}(K))-\frac{Lm\lambda^2}{K}\label{eq:thm-proof-k-4}
\end{align}
Now, we can use the definition of $\alpha$ to obtain:
\begin{align}
	H_i^* \big(\nabla H_i (\omega_{i,m}(K))\big)&\geq \alpha_{H_i}H_i (\omega_{i,m}(K))~\forall i\in[n]\label{eq:thm-proof-k-5}
\end{align}
For all $i\in[n]$, using the definition of $G_i$ and defining $\gamma_i :=\ln(1+\frac{U_i(e-1)}{L_i})$, we have:
\begin{align}
	-G'_i (u)+G_i (u)&=-G'_i (u)+\gamma_i G_i(u)-(\gamma_i -1)G_i(u)\nonumber\\
	&=\frac{L_i \gamma_i}{e-1}u-(\gamma_i -1)G_i(u)\label{eq:thm-proof-k-6}
\end{align} 
Combining $\ref{ell}$, $\ref{eq:thm-proof-k-4}$, $\ref{eq:thm-proof-k-5}$ and $\ref{eq:thm-proof-k-6}$ along with $P_{\rm gseq}\geq 0$, we obtain:
\begin{align*}
	{\rm D}^* -\frac{Lm\lambda^2}{K}&\leq \sum_{t=1}^m \sigma_{F_t} \big(d_t (K)\big)-\frac{Lm\lambda^2}{K}\\
	&-\sum_{i=1}^n \big(H_i^*(\nabla H_i (\omega_{i,m}(K)))+G'_i (\hat{c}_i^T\omega_{i,m}(K)))\\
	&\leq {\rm ALG}-\sum_{i=1}^n \alpha_{H_i}H_i (\omega_{i,m}(K))\\
	&+\sum_{i=1}^n\big(\frac{L_i \gamma_i}{e-1}\hat{c}_i^T\omega_{i,m}(K)-(\gamma_i -1)G_i (\hat{c}_i^T\omega_{i,m}(K))\big)\\
	&\leq {\rm ALG}-\sum_{i=1}^n \alpha_{H_i}H_i(\omega_{i,m}(K))+\sum_{i=1}^n(\frac{\gamma_i}{e-1}+\gamma_i -1)H_i (\omega_{i,m}(K))\\
	&\leq \max_{i\in[n]}\big\{1-\alpha_{H_i}+\frac{\gamma_i}{e-1}+\gamma_i -1\big\}{\rm ALG}\\
	&= \max_{i\in[n]}\big\{-\alpha_{H_i}+\gamma_i\frac{e}{e-1}\big\}{\rm ALG}\\
\end{align*}
Therefore, if $K\to \infty$, the competitive ratio would be derived as $\frac{{\rm ALG}}{{\rm D}^*}\geq \frac{1}{\max_{i\in[n]}\big\{-\alpha_{H_i}+\gamma_i\frac{e}{e-1}\big\}}$.\\
\section{Proof of Theorem 4.2}
Considering that $G'_1 (u)=\ln(\frac{U_1 e}{L_1})G_1 (u)~;u\geq \frac{1}{\ln\big(\frac{U_1 e}{L_1}\big)}$, combining $\ref{eq:thm-proof-k-4}$ and $\ref{eq:thm-proof-k-5}$ for $n=1$ along with $P_{\rm gseq}\geq 0$, we obtain:
\begin{align*}
	D^* -\frac{Lm\lambda^2}{K}&\leq \sum_{t=1}^m \sigma_{F_t} \big(d_t(K)\big)-\frac{Lm\lambda^2}{K}\\
	&-H_1^*(\nabla H_1 (\omega_{1,m}(K)))-G'_1 (\hat{c}_1^T\omega_{1,m}(K))\\
	&\leq {\rm ALG}-\alpha_{H_1}H_1 (\omega_{1,m}(K))-\ln(\frac{U_1}{L_1})G_1(\hat{c}_1^T\omega_{1,m}(K))\\
	&\leq {\rm ALG}-\alpha_{H_1}{\rm ALG}+\ln(\frac{U_1}{L_1}){\rm ALG}\\
	&= \big(1-\alpha_{H_1}+\ln(\frac{U_1}{L_1})\big){\rm ALG}\\
\end{align*}
Therefore, if $K\to \infty$, the competitive ratio would be derived as $\frac{1}{1-\alpha_{H_1}+\ln(\frac{U_1}{L_1})}$.
\bibliographystyle{unsrtnat}

\end{document}